\documentclass[11pt]{amsart}
\addtolength{\topmargin}{-0.6cm}
\addtolength{\textheight}{1.2cm}
\addtolength{\evensidemargin}{-0.8cm}
\addtolength{\oddsidemargin}{-0.8cm}
\addtolength{\textwidth}{1.6cm}
\numberwithin{equation}{section}
\usepackage{graphicx}
\usepackage{amsmath}
\usepackage{amsthm}
\usepackage{amssymb}
\usepackage{amsfonts}
\usepackage{mathtools}
\usepackage{mathbbol}
\usepackage{caption}
\usepackage{spverbatim}
\usepackage{subcaption}
\usepackage{textcomp}
\usepackage{booktabs}
\usepackage{array}
\usepackage{tabularray}
\UseTblrLibrary{booktabs}

\usepackage[english]{babel}
\usepackage[hidelinks]{hyperref}
\usepackage{enumitem}
\setlist[enumerate]{
  label={\upshape(\roman*)},
}
\usepackage{url} 

\usepackage{breakurl}
\usepackage{xcolor}
\hypersetup{breaklinks=true}

\usepackage{cite}



\newtheorem{theorem}{Theorem}[section]

\newtheorem{lemma}[theorem]{Lemma}
\newtheorem{toptheorem}{Theorem}

\newtheorem{topcorollary}[toptheorem]{Corollary}

\newtheorem{proposition}[theorem]{Proposition}

\newcommand{\bbz}{\mathbb{Z}}


\newcommand{\zeros}{\mathcal{V}}

\newcommand{\defn}[1]{\emph{#1}}
\newcommand{\eq}[1]{(\ref{#1})}
\newcommand{\tab}[1]{Table~\ref{#1}}
\newcommand{\deff}[1]{Definition~\ref{#1}}
\newcommand{\thm}[1]{Theorem~\ref{#1}}

\newcommand{\lm}[1]{Lemma~\ref{#1}}
\newcommand{\prop}[1]{Proposition~\ref{#1}}
\newcommand{\coro}[1]{Corollary~\ref{#1}}
\newcommand{\rmk}[1]{Remark~\ref{#1}}

\newcommand{\eg}[1]{Example~\ref{#1}}
\newcommand{\sect}[1]{Section~\ref{#1}}
\theoremstyle{definition}
\newtheorem{definition}[theorem]{Definition}
\newtheorem{example}[theorem]{Example}

\newtheorem{remark}[theorem]{Remark}

\def \g {\mathfrak{g}}
\renewcommand{\Im}{\mathrm{Im}}

\newcommand{\End}{\ensuremath{\mathrm{End}}}
\newcommand{\lcm}{\ensuremath{\mathrm{lcm}}}
\newcommand{\irr}{\mathrm{Irr}}

\newcommand{\Ext}{\ensuremath{\mathrm{Ext}}}
\newcommand{\opp}{\ensuremath{\mathrm{op}}}

\newcommand{\stcond}{(FinHopf)~}
\newcommand{\cdk}[1]{$(\mathbf{CH_{#1}})$~}

\newcommand{\infcy}{\ensuremath{\mathbb{Z}}} 
\newcommand{\Zn}[1]{\ensuremath{\mathbb{Z}/#1\mathbb{Z}}}

\newcommand{\field}{\ensuremath{\mathbb{k}}}
\newcommand{\fieldx}{\ensuremath{\mathbb{k}^{\times}}}


\newcommand{\mfield}{\mathbb{G}_{\mathrm{m}}}

\newcommand{\cexta}{\ensuremath{N}}
\newcommand{\cexte}{\ensuremath{\hat{G}}}
\newcommand{\cextb}{\ensuremath{Q}}

\newcommand{\cmmtx}{\ensuremath{M}}
\newcommand{\cmmtxij}{\ensuremath{m}}
\newcommand{\orderu}{\ensuremath{d}}

\newcommand{\halg}{\ensuremath{H}} 
\newcommand{\nalg}{\ensuremath{R}} 
\newcommand{\fdh}[1]{\ensuremath{#1^{\circ}}} 
\newcommand{\gpfdh}[1]{\ensuremath{G(\fdh{#1})}}
\newcommand{\calg}{\ensuremath{C}} 
\newcommand{\maxi}{\mathbf{m}}


\newcommand{\runit}{\ensuremath{\xi}} 
\newcommand{\idc}{\ensuremath{\maxi_{\bar{\epsilon}}}} 
\newcommand{\orbidc}{\ensuremath{\mathcal{O}}}
\newcommand{\bgp}{\ensuremath{\Lambda}} 
\newcommand{\agp}{\ensuremath{\Delta}} 
\newcommand{\egp}{\ensuremath{\Sigma}} 
\newcommand{\cgp}{\ensuremath{\Omega}} 
 
\newcommand{\tccl}{\ensuremath{\beta}} 
 
 
\newcommand{\cocyl}[2]{\ensuremath{Z^2(#1,#2)}}
\newcommand{\cobnd}[2]{\ensuremath{B^2(#1,#2)}}
\newcommand{\cohom}[2]{\ensuremath{H^2(#1,#2)}}




\newcommand{\mtx}[2]{\ensuremath{M_{#1}(#2)}}
\newcommand{\abs}[1]{\left|#1\right|}
\newcommand{\ordergp}[2]{\abs{#1}_{#2}}
\newcommand{\stabb}{\mathrm{Stab}}
\newcommand{\stab}[1]{\mathrm{Stab}_{G_0}(#1)}
\newcommand{\tensork}{\otimes_{\field}}

\newcommand{\ann}[2]{\mathrm{ann}_{#1}(#2)}
\newcommand{\sd}[1]{\mathrm{Sd}(#1)}
\newcommand{\maxspec}[1]{\mathrm{MaxSpec}(#1)}
\newcommand{\tr}{\mathrm{tr}}
\newcommand{\trreg}{\tr_{\mathrm{reg}}}

\newcommand{\intord}{\mathrm{io}}
\newcommand{\generate}[1]{\ensuremath{\langle #1 \rangle}}
\newcommand{\D}{D(R/C,\tr)}
\newcommand{\Dn}[1]{D_{#1}(R/C,\tr)}
\newcommand{\MDn}[1]{MD_{#1}(R/C,\tr)}

\newcommand{\Dnh}[1]{D_{#1}(H/C,\tr)}

\newcommand{\autk}{\mathrm{Aut}_{\field-\mathrm{Alg}}}
\newcommand{\algk}{\mathrm{Alg}_{\field}}
\newcommand{\algkk}[1]{\algk(#1,\field)}
\newcommand{\lw}{\mathrm{W}_{\mathrm{l}}}
\newcommand{\rw}{\mathrm{W}_{\mathrm{r}}}
\newcommand{\chark}{\mathrm{char}\,}


\newcommand{\incl}{\hookrightarrow}
\newcommand{\algtr}[1]{(#1,\calg,\tr)}

\title{The Lowest Discriminant Ideal of Central Extensions of abelian Groups}

\author[Zhongkai Mi]{Zhongkai Mi}
\address{Shanghai Center for Mathematical Sciences \\
Fudan University \\
Shanghai, 200438 \\
People's Republic of China}
\email{zhongkai\_mi@fudan.edu.cn}
\thanks{The research  of the author has been supported by NSF grant DMS--2200762.}
\date{}

\keywords{Algebras with trace, discriminant ideals, Cayley--Hamilton Hopf algebras, projective representations of finite groups}
\subjclass[2010]{Primary 16G30; Secondary 16T05, 16D60, 16W20}
\begin{document}
\begin{abstract}
In a previous joint paper with Wu and Yakimov, we gave an explicit description of the lowest discriminant ideal in Cayley-Hamilton Hopf algebras $(\halg,\calg,\tr)$ with basic identity fiber, i.e. all irreducible representations over the kernel of the counit of the central Hopf subalgebra $\calg$ are one-dimensional. 
We first show in this work that the zero set of lowest discriminant ideal of the group algebra of any central extension of an arbitrary finite group is the whole maximal spectrum of $\calg$. Then we specialize to central extensions of abelian groups and study the fiber algebras and orbits under automorphism groups in this zero set of the lowest discriminant ideal. An example is given, in which isomorphisms of fiber algebras do not always lift to automorphisms of $\halg$.

\end{abstract}
\maketitle

\section{Introduction}
Discriminants and  discriminant ideals (\deff{def:dist}) are defined for an algebra with trace $(\nalg,\calg,\tr)$ (\deff{def:alg_tr}). In literature $\nalg$ is always a finitely generated algebra over a field that is module finite over a central subalgebra $\calg$. Discriminants are important invariants and has been a very active area of research in noncommutative algebra in recent years. On one hand, there is work on their computation using techniques such as smash products \cite{GKM}, Poisson geometry \cite{LY}\cite{NTY1}, cluster algebras \cite{NTY2} and reflexive hulls \cite{CGWZ}. On the other hand, it has been used to study automorphism groups of noncommutative algebras \cite{CPWZ1}\cite{CPWZ2} and the Zariski cancellation problem\cite{BZ1}. The Zariski cancellation problem is about whether $A\cong B$ as algebras when $A[X]\cong B[X]$. Discriminant ideals are much more general and do not require some ideal to be principal. Currently very little is known about discriminant ideals. If $\nalg$ is a prime algebra over an algebraically closed field and $\calg$ is its center and normal, $\tr$ is a representation-theoretic trace; the zero set of the highest discriminant ideal $\Dn{h}$ (\deff{def:hl_dist}) is the complement of the Azumaya locus \cite[Main Theorem]{BY}. This result relies on a description of the zero sets of discriminant ideals in terms of representations as follows: if

\medskip
\cdk{n} $(\nalg,\calg,\tr)$ is a finitely generated Cayley-Hamilton algebra (\deff{def:CH_alg}) of degree $n$ over an algebraically closed field $\field$ and $\chark\field\notin[1,n]$,
\medskip

\noindent then \cite[Theorem 4.1(b)]{BY}:
\begin{align}
  \zeros_k&=\zeros(\Dn{k})=\zeros(\MDn{k}) \nonumber\\
  &=\bigg\{ \maxi \in \maxspec{\calg} \bigg\lvert\sd{\maxi}=\sum_{V\in \irr(R/\maxi R)} \dim(V)^2<k\bigg\}.
  \label{eq:sd}
\end{align}
where $\irr(\nalg/\maxi \nalg)$ denotes isomorphism classes of irreducible representations of $\nalg$ over $\maxi\in\maxspec{\calg}$. The lowest discriminant ideal (\deff{def:hl_dist}) is in general very hard to handle as it is the most degenerate.

If $(\halg,\calg,\tr)$ is a prime Cayley-Hamilton Hopf algebra (\deff{def:CH_alg}) then the zero set of the lowest discriminant ideal $\zeros(\Dnh{l})$ is in the complement of the unramified locus; which is a dense, open and proper subset of $\maxspec{\calg}$ \cite[Corollary 2.2]{BG2001}. However this does not hold true for group algebras as we will see in \thm{tthm:dist}.

In a Cayley-Hamilton Hopf algebra $(H,C,\tr)$, let $\idc$ denote the kernel of the counit of $\calg$. Suppose $\halg/\idc \halg$ (the identiy fiber algebra) is basic, i.e. all of its irreducible representations are one-dimensional; the level $l$ of the lowest discriminant ideal (\deff{def:hl_dist}) is the number of isomorphism classes of such representations plus one \cite[Theorem B(b)]{MWY2023}. Without assuming $\halg/\idc \halg$ is basic, this level is greater than the integral order of $\halg$ \cite[Theorem B(d)]{MWY2023}. The integeral order of $\halg$ is no greater than the size of the orbit of $\idc$ under the action of winding automorphism groups $\lw(\gpfdh{\halg})$ or $\rw(\gpfdh{\halg})$ \cite[Proof of Theorem B(d) and Theorem C(a)]{MWY2023}.

We first consider group algebras of central extensions of finite groups, where the identity fiber algebra is not necessarily basic. In fact up to tensor product with a polynomial ring, this is the most general setting in characteristic zero (\prop{prop:general}) for a cocommutative pair $(H,C)$ satisfying 

\medskip
\stcond $\halg$ is a finitely generated Hopf algebra over an algebraically closed field $\field$ and a finite module over a central Hopf subalgebra $C$.
\medskip

\begin{toptheorem}
Let
\begin{equation*}
  1\longrightarrow \cexta\overset{i}{\longrightarrow} \cexte\overset{p}\longrightarrow \cextb\longrightarrow 1
\end{equation*}
be a central extension of a finite group $\cextb$ by a finitely generated abelian group $\cexta$ and $\field$ be an algebraically closed field with $\chark\field\notin[1,\abs{\cextb}]$. Define $\halg=\field \cexte$ and $\calg=\field \cexta$.
\begin{enumerate}
\item Denote by $\trreg: H\to C$ the regular trace. Then $(H,C,\trreg)$ is a Cayley-Hamilton Hopf algebra of degree $d=\abs{\cextb}$. \label{tthm:dist:CH}
\item Suppose $\maxi\in\maxspec C$, then $H/\maxi H\cong \field_{\tccl} (\cextb)$ for some 2-cocycle $\tccl\in\cocyl{\cextb}{\fieldx}$. Consequently,  $H/\maxi H$ is semisimple and 

    \begin{equation}
      \zeros(D_k(\halg/\calg,\trreg))=\zeros(MD_k(\halg/\calg,\trreg))=\begin{cases}
			       \varnothing,\quad &k \le d,\\
			       \maxspec {C},\quad &k>d.
                             \end{cases}
    \label{eq:main_dist}
    \end{equation}
    In particular, $\zeros(\Dnh{l})=\maxspec{\calg}$.\label{tthm:dist:di}
  \item The integral order of $H$ is $1$ or equivalently its left and right homological integrals are both trivial.\label{tthm:dist:io}
\end{enumerate}
\label{tthm:dist}
\end{toptheorem}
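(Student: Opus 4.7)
My plan proceeds in three steps, one per conclusion.

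For (i), I fix a normalized set-theoretic section $s: \cextb \to \cexte$ of $p$. Centrality of $\cexta$ in $\cexte$ implies every element of $\cexte$ is uniquely $n\cdot s(q)$ for $n\in\cexta$, $q\in\cextb$, so $\{s(q)\}_{q\in\cextb}$ is a free $\calg$-basis of $\halg$ of rank $d=\abs{\cextb}$. The regular trace $\trreg$ is then the ordinary trace of left multiplication on this free module, and the classical Cayley--Hamilton theorem for module-finite extensions endows $(\halg,\calg,\trreg)$ with the structure of a Cayley--Hamilton Hopf algebra of degree $d$.

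For (ii), the section $s$ determines a normalized 2-cocycle $\maptccl:\cextb\times\cextb\to\cexta$ by $s(q_1)s(q_2)=\maptccl(q_1,q_2)\,s(q_1q_2)$. A maximal ideal $\maxi\in\maxspec{\calg}$ corresponds to a character $\chi:\cexta\to\fieldx$, and the images $\bar{s}(q)$ in $\halg/\maxi\halg$ satisfy $\bar{s}(q_1)\bar{s}(q_2)=\chi(\maptccl(q_1,q_2))\bar{s}(q_1q_2)$, exhibiting $\halg/\maxi\halg\cong\field_{\chi\circ\maptccl}(\cextb)$ as a twisted group algebra. Because $\chark\field\notin[1,\abs{\cextb}]$, Maschke's theorem for twisted group algebras (via the standard averaging argument using $(1/\abs{\cextb})\sum_q u_q^{-1}(-)u_q$) gives semisimplicity, so by Wedderburn $\sd{\maxi}=\dim_\field(\halg/\maxi\halg)=d$ uniformly in $\maxi$. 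Substituting the constant $\sd{\maxi}=d$ into \eqref{eq:sd} yields \eqref{eq:main_dist}, and since the smallest $k$ with $\zeros(D_k)\neq\varnothing$ is $l=d+1$, we obtain $\zeros(\Dnh{l})=\maxspec{\calg}$.

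For (iii), I would leverage cocommutativity of $\halg=\field\cexte$: the antipode is involutory, so $\halg$ is unimodular as a Hopf algebra and $\int^l_\halg=\int^r_\halg$, whence it suffices to show the common integral character equals $\epsilon$ (equivalently, integral order $1$). The plan is to compute the right $\halg$-module $\Ext^r_\halg(\field,\halg)$, where $r$ is the free rank of $\cexta$, using a projective resolution of $\field$ built by splicing a Koszul-type resolution over $\calg$ (available since $\cexta$ is finitely generated abelian, making $\calg$ AS-Gorenstein of injective dimension $r$) with a resolution over the identity fiber, which by (ii) applied to $\maxi=\idc$ is the \emph{untwisted} semisimple algebra $\field\cextb$. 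Centrality of $\cexta$ in $\cexte$ makes the conjugation action of $\cexte$ on $\cexta$ trivial, which trivializes the right $\cexta$-action on the top Koszul class, while semisimplicity of $\field\cextb$ forces any lift of an element of $\cextb$ to act trivially on the induced integral class; together these identify the integral character with $\epsilon$. The main obstacle is this final step: whereas (i) and (ii) follow cleanly from the Cayley--Hamilton discriminant machinery once the free-module and fiber structures are identified, computing the homological integral requires careful bookkeeping when $\cexta$ has torsion of order divisible by $\chark\field$ (so $\calg$ need not be regular), and the decisive structural input throughout is the centrality of $\cexta$, which trivializes the modular character of $\cexte$.
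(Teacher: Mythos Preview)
Your arguments for (i) and (ii) are correct and essentially identical to the paper's: fix a set-theoretic section of $p$ to exhibit $\halg$ as a free $\calg$-module of rank $d$, push the extension cocycle through the character $\chi:\cexta\to\fieldx$ associated to $\maxi$ to identify the fiber with a twisted group algebra, invoke Maschke for twisted group algebras to get semisimplicity, and read off the discriminant description from the constant value $\sd{\maxi}=d$ via~\eqref{eq:sd}.

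For (iii) your approach diverges from the paper's and is, as you yourself flag, incomplete. The paper avoids any direct $\Ext$ computation: it picks a generator $c$ of the torsion-free part of $i(\cexta)$, observes that $c-1$ is a central nonzero divisor in $\halg$ (using the $\bbz$-grading coming from $\field[c^{\pm1}]$), and invokes \cite[Lemma~2.6]{LWZ} to conclude $\intord(\halg)=\intord(\halg/\langle c-1\rangle)$. Iterating kills the free part of $\cexta$ and reduces to the group algebra of a finite group, where the integral order is $1$. This sidesteps precisely the bookkeeping you identify as the obstacle --- torsion in $\cexta$, possible non-regularity of $\calg$, and assembling a hybrid Koszul/fiber resolution --- by never building a resolution at all. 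Your route via a spliced resolution is plausible in principle, but the passage from ``centrality trivializes the conjugation action'' to ``the integral character is $\epsilon$'' is asserted rather than proved, and the claim that involutory antipode forces $\int^l=\int^r$ as $\halg$-modules (rather than merely $S(\int^l)\cong\int^r$) needs justification in the infinite-dimensional setting. The paper's reduction is both shorter and more robust.
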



There is the naturally arising question of whether the fiber algebra $H/\maxi H\cong \field_{\tccl} (\cextb)$ is simple. A necessary condition is that $\cextb\cong L/Z(L)$ for a group of central type $L$. A group $L$ is of \defn{central type} if it has an irreducible character $\phi$ with degree $\phi(1)=\sqrt{\abs{L:Z(L)}}$. Such a group is always solvable \cite{LY1979} and so if $\halg/\maxi \halg$ is simple for some $\maxi\in\maxspec C$ then $\cextb$ is solvable. However, $\cextb$ being solvable is by no means a sufficient condition, as an abelian $\cextb$ is always solvable but $\abs{\cextb}$ may not be a square and then $\field_{\tccl}\cextb$ can not be simple by Artin-Wedderburn Theorem. The question can be addressed by determining the number of irreducible representaions following \cite[Chapter 10.2]{KAR}.





Next we add the condition that $\halg/\idc\halg$ basic, or equivalently $\cextb$ is an abelian group to study the fiber algebras and orbits of elements in the lowest discriminant ideal $\maxspec{\calg}$ under automorphism groups. Such Hopf algebras include the group algebra of a central extension of a finitely generated (not necessarily finite) abelian group (see \sect{sec:central-ext} for more details). In this setting, maximally stable modules under the action of the group $\gpfdh{(\halg/\idc\halg)}$ (\deff{def:max-stab}) play a central role in the description of the lowest discriminant ideal \cite[Theorem B(c)]{MWY2023}.
\begin{topcorollary}
  Let $\cexte$ be a central extension of a finite abelian group $\cextb$ by a finitely generated abelian group $\cexta$ and $\field$ be an algebraically closed field with $\chark \field\notin[1,\abs{\cextb}]$. Define $\halg=\field\cexte$ and $\calg=\field \cexta$.
  \begin{enumerate}
  \item Let $\maxi\in\maxspec{\calg}$, $V\in\irr(\halg/\maxi\halg)$, $\dim V=n$ and $k =\abs{\cextb}/n^2$. then 
  \begin{equation*}
    \halg/\maxi\halg\cong \mtx{n}{\field}^{\oplus k}.
  \end{equation*}
  \label{tcor:basic:semi}
  \item All irreducible modules of $\halg$ are maximally stable.
  \label{tcor:basic:stab}
  \item 
   Identify $\cexta$ with a subgroup of $\cexte$. There is a subgroup $A\lhd \cexta$ such that $\cexte/A$ is abelian and
    \begin{align}
      \orbidc &=\autk(\halg,\calg)\cdot \idc=\lw(\gpfdh{\halg})\cdot \idc=\rw(\gpfdh{\halg})\cdot \idc\label{eq:orbit_one}\\
      & \cong \gpfdh{(\field ~\cextb/A)}\subset\gpfdh{\calg}.
    \end{align}
  \label{tcor:basic:orbit}
  \end{enumerate}
\label{tcor:basic}
\end{topcorollary}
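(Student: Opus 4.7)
My plan is to prove the three parts in order, using Theorem~\ref{tthm:dist} as the principal input together with classical facts about twisted group algebras of finite abelian groups. For part~(i), Theorem~\ref{tthm:dist}(ii) gives $\halg/\maxi\halg \cong \field_{\tccl}(\cextb)$ for some $\tccl \in \cocyl{\cextb}{\fieldx}$, and this algebra is semisimple since $\chark\field \notin [1,\abs{\cextb}]$. A standard structural result for twisted group algebras of finite abelian groups (see e.g.\ \cite{KAR}) asserts that every irreducible $\field_{\tccl}(\cextb)$-module has a common dimension $n$ with $n^2 \mid \abs{\cextb}$, and that the number of isomorphism classes is exactly $\abs{\cextb}/n^2$. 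Combined with Artin--Wedderburn this yields $\halg/\maxi\halg \cong \mtx{n}{\field}^{\oplus k}$ with $k = \abs{\cextb}/n^2$.

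For part~(ii), the group $\gpfdh{(\halg/\idc\halg)} \cong \hom(\cextb,\fieldx)$ acts on the $k = \abs{\cextb}/n^2$ iso classes of irreducibles of $\halg/\maxi\halg$ by twisting. A direct analysis of this action---which permutes the matrix blocks of $\field_{\tccl}(\cextb)$ transitively via the symplectic bicharacter $\omega(g,h) = \tccl(g,h)\tccl(h,g)^{-1}$---gives each stabilizer order exactly $n^2$, the maximum possible value for a dimension-$n$ module, so every irreducible is maximally stable in the sense used in \cite[Theorem~B(c)]{MWY2023}.

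For part~(iii), set $A := [\cexte,\cexte]$. Since $\cexte/\cexta = \cextb$ is abelian, $A \subseteq \cexta$, and since $\cexta$ is central in $\cexte$, $A$ is central in $\cexte$ hence normal in $\cexta$; by construction $\cexte/A$ is abelian. Characters of $\halg = \field\cexte$ factor through $\cexte/A$, so $\gpfdh{\halg} \cong \hom(\cexte/A,\fieldx)$. The winding automorphisms act on $\maxspec{\calg} = \hom(\cexta,\fieldx)$ by translation along the restriction map $\hom(\cexte/A,\fieldx) \to \hom(\cexta,\fieldx)$, whose image is $\hom(\cexta/A,\fieldx)$ (since $\fieldx$ is divisible, every character of $\cexta/A$ extends to $\cexte/A$). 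Thus both the $\lw$- and $\rw$-orbits of $\idc$ equal $\hom(\cexta/A,\fieldx)$. The winding orbit is automatically contained in the $\autk(\halg,\calg)$-orbit; for the reverse inclusion, I would combine part~(i) with \cite[Theorem~C]{MWY2023} to show that any algebra automorphism preserving $\calg$ must send $\idc$ to a maximal ideal whose fiber algebra and representation orbit match those of $\idc$, forcing it into the winding orbit.

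The main obstacle will be the reverse inclusion in part~(iii): the group $\autk(\halg,\calg)$ is considerably larger than the winding subgroups since its elements need not preserve the Hopf structure, so the nontrivial content is a rigidity statement ruling out extra orbit elements. Overcoming it will rely on the rigid fiber decomposition from part~(i), the maximal stability from part~(ii), and the detailed orbit analysis of the precursor paper \cite{MWY2023}.
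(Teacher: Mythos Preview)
Your approach is correct, but it diverges from the paper's in parts~(i) and~(ii), and you have made the reverse inclusion in part~(iii) harder than it is.

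For~(i) and~(ii) the paper does not invoke the structure theory of $\field_\tccl\cextb$ for abelian $\cextb$. It instead quotes two results from \cite{MWY2023}. For~(i): since the identity fiber is basic, $G_0=\gpfdh{(\halg/\idc\halg)}$ acts transitively on $\irr(\halg/\maxi\halg)$ \cite[Theorem~3.1(c)]{MWY2023}, forcing all irreducibles over $\maxi$ to share a dimension; combined with semisimplicity from Theorem~\ref{tthm:dist}\ref{tthm:dist:di} this gives the Wedderburn decomposition. For~(ii): Theorem~\ref{tthm:dist}\ref{tthm:dist:di} says $\sd{\maxi}=\abs{\cextb}$ for every $\maxi$, so every $\maxi$ lies in $\zeros_l$, and \prop{prop:mstable_l} then says this is equivalent to every irreducible over $\maxi$ being maximally stable. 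Your route through the bicharacter $\omega(g,h)=\tccl(g,h)\tccl(h,g)^{-1}$ and the explicit count of irreducibles of $\field_\tccl\cextb$ is equally valid and more self-contained, but reproves from scratch what the paper imports as a black box.

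For~(iii) your choice $A=[\cexte,\cexte]$ coincides with the paper's subgroup generated by $\{\tccl(g_1,g_2)-\tccl(g_2,g_1)\}$, and your identification of the winding orbit with $\hom(\cexta/A,\fieldx)$ is the same. But the containment $\autk(\halg,\calg)\cdot\idc\subseteq\lw(\gpfdh{\halg})\cdot\idc$ that you flag as the main obstacle requires no appeal to \cite[Theorem~C]{MWY2023}. By part~(i) the winding orbit of $\idc$ is exactly the set of $\maxi$ with basic fiber: a fiber has a one-dimensional irreducible iff all of its irreducibles are one-dimensional. Any $\sigma\in\autk(\halg,\calg)$ induces a $\field$-algebra isomorphism $\halg/\idc\halg\cong\halg/\sigma(\idc)\halg$, so $\sigma(\idc)$ also has basic fiber and hence already lies in the winding orbit. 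The paper's proof records only the characterization ``basic $\Leftrightarrow$ commutative $\Leftrightarrow$ the character of $\cexta$ corresponding to $\maxi$ kills $A$'' and leaves this sandwich argument implicit.
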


Part\ref{tcor:basic:semi} implies that two fiber algebras are isomorphic if and only if their irreducible represenations have the same dimension.
  The orbit $\orbidc$  is the subgroup of $\gpfdh{C}\cong\maxspec{\calg}$ with basic fibers or the intersection with $\calg$ of annihilators of one-dimensional irreducible $\halg$-modules. Such transitive property may not hold in higher dimensions. In \sect{sec:central-ext} we show that in general winding automorphism groups or even $\autk(\halg,\calg)$ does not act transitively on a subset of $\maxspec{\calg}$ with isomorphic fiber algebras or a subset sharing some stronger property.

\section{Background}
In this section, we review definitions related to discriminant ideals, Cayley-Hamilton Hopf algebras and winding automorphisms of Hopf algebras.
\begin{definition}
Let $(\nalg,\calg)$ be an algebra $\nalg$ over a field $\field$ with a central subalgebra $\calg$. Following the definition in \cite{BY} a nonzero function $\tr:\nalg \mapsto\calg$ is called a \defn{trace} if it is $\calg$-linear and cyclic, i.e.
\begin{enumerate}
  \item $\tr(r_1+c r_2)=\tr(r_1)+c\tr(r_2)\quad\forall c\in\calg,\,r_1,\,r_2\in\nalg$;
  \item $\tr(r_1 r_2)=\tr(r_2 r_1)\quad r_1,\,r_2\in\nalg$.
\end{enumerate}
If such a function exits, $(\nalg,\calg,\tr)$ is called an \defn{algebra with trace}.
\label{def:alg_tr}
\end{definition}

Assuming that $\tr(1)$ has no $\infcy$-torsion, then $\nalg$ has invariant basis number (IBN), i.e. $\nalg^n\cong\nalg^m$ implies $n=m$ \cite[Proposition 13.29]{RW1}. 

\begin{example}[regular trace]
Let $\nalg$ be an algebra over $\field$ and $\calg$ a central subalgebra. Suppose $\nalg$ is a free module over $\calg$ of rank $n$. Then each $\phi\in\End(_\calg \nalg)$ can be represented by some $\Theta(\phi)\in\mtx{n}{\calg}$ and there is an inclusion $\eta: \nalg\incl\End(_\calg \nalg)$ defined by left multiplication. Let $\tr$ be the trace on $\mtx{n}{C}$ defined by summing up the diagonal elements. Then the \defn{regular trace} $\trreg$ is defined by the composition
\begin{equation}
  \nalg\xhookrightarrow{\eta}\End(_\calg \nalg)\xhookrightarrow{\Theta}\mtx{n}{\calg}\overset{\tr}\longrightarrow \calg.
  \label{eq:trreg}
\end{equation}
Note that in the case of $\calg=\field$, $\trreg(x)$ is $m$-times the usual trace for $x\in\mtx{m}{\field}$ because $\mtx{m}{\field}$ is a direct sum of $m$ columns as a vector space over $\field$ \cite[2.2-(3)]{BY}. Thus $\Theta$ is proper inclusion for $m>1$ as $n=m^2$ in \eq{eq:trreg}.
\label{ex:reg-tr}
\end{example}

An important class of examples are the Cayley-Hamilton Hopf algebras, which includes group algebras of central extensions of finite groups, quantum universal enveloping algebras, big quantum Borel subalgebras and quantum function algebras.

In $\field[x_1,\cdots,x_n]$, the $k$-th power sum is defined by 
\begin{equation*}
  \psi_k(x_1,\cdots,x_n)=\sum_{i=1}^n x_i^k
\end{equation*}
and for $1\le j \le n$ define
\begin{equation*}
  e_j(x_1,\cdots,x_n)=\sum_{1\le i_1<\cdots<i_j\le n} \Biggl(\prod_{l=1}^j x_{i_l}\Biggl).
\end{equation*}
Then by Newton identities there are $p_i\in\infcy[(i!)^{-1}][x_1,\cdots,x_n]$ for $1\le i \le n$ such that
\begin{equation*}
p_i(\psi_1,\cdots,\psi_i)=e_i
\end{equation*}
as formal polynomials in $\infcy[x_1,\cdots,x_n]$.
\begin{definition}
Let $(\nalg,\calg,\tr)$ be an algebra with trace over $\field$, then the \defn{n-characteristic polynomial} of $r\in\nalg\in\calg[x]$ is 
\begin{equation*}
  \chi_{n,r}(x)\coloneqq x^n+\sum_1^n (-1)^{i} p_i(\tr(r),\cdots,\tr(r^{i}))x^{n-i}.
\end{equation*}
An algebra with trace $(\nalg,\calg,\tr)$ is a \defn{Cayley-Hamilton algebra of degree d} if it satisfies
\begin{enumerate}
  \item $\tr(1)=d$,
  \item $\chi_{d,r}(r)=0$, $\forall r\in\nalg$.
\end{enumerate}
It is called a \defn{Cayley-Hamilton Hopf algebra of degree d} if furthermore $\halg$ is a Hopf algebra and $\calg$ is a Hopf subalgebra.
\label{def:CH_alg}
\end{definition}
A finitely generated Cayley-Hamilton algebra $(\nalg,\calg,\tr)$ is a finite module over $\tr(\nalg)$, so in particular it is a finite module over $\calg$ \cite[Theorem 2.6]{DPRR}. If $\chark\field=0$, it has an injective represention compatible with trace $\phi:\nalg\to\mtx{d}{B}$ for some commutative algebra $B$, i.e. $\phi$ is an algebra homomorphism s.t. $\tr_{\mtx{d}{B}}\circ\phi=\phi\circ\tr_R$ if and only if $\nalg$ is a Cayley-Hamilton algebra of degree $d$ \cite[Theorem 0.3]{P1}. Here $\tr_{\mtx{d}{B}}$ is computed through taking the sum of diagonal elements. If  $(\nalg,\calg,\tr)$ satisfies \cdk{d}, $\maxi\in\maxspec{\calg}$ and $J$ is the Jacobson radical, then
\begin{equation*}
  (\nalg/\maxi\nalg)/J(\nalg/\maxi\nalg)\cong \oplus_{i=1}^k \mtx{n_i}{\field}
\end{equation*}
for some positive integers $k$, $n_i$ and there are positive integer $s_i$ such that \cite[Proposition 4.3]{DP}
\begin{equation*}
  \sum_{i=1}^k s_i n_i=d.
\end{equation*}
\begin{example}
Let  $\nalg$ be a finitely generated algebra over a field $\field$ with $\chark\field\notin[1,r]$ and a free module over a central subalgebra $\calg$ of rank $r$. Then $(\nalg,\calg,\trreg)$ is a Cayley-Hamilton algebra of degree $r$.
\end{example}

\begin{definition}\hfill
\begin{enumerate}
\item  The \emph{$n$-th discriminant ideal} $\Dn{n}$ is defined as the ideal of $C$ generated by elements of the form
\begin{equation*}
 \det(\tr(y_i y_j))_{1\le i,j \le n}, \quad (y_1,\cdots,y_n) \in R^n \;
\end{equation*}
\item and the \emph{$n$-th modified discriminant ideal} $\MDn{n}$ is defined as the ideal of $C$ generated by the elements
\begin{equation*}
 \det(\tr(y_i y'_j))_{1\le i,j \le n}, \quad (y_1,\cdots,y_n), (y'_1,\cdots,y'_n) \in R^n.
\end{equation*}
\item When $\nalg$ is furthermore a free (left) $C$-module of rank $N$, the \emph{discriminant} $\D$ is given by
\begin{equation*}
  \D=\det(\tr(a_i a_j))_{1\le i,j \le N}
\end{equation*}
for a basis $\displaystyle\{a_1,\cdots,a_N\}$ of R as a C-module. \end{enumerate}
\label{def:dist}
\end{definition}
The discriminant is unique up to a factor of the square of a unit of $\calg$ computed from the determinant of the change of basis matrix and
\begin{equation*}
 \Dn{n^2}=\MDn{n^2}=\generate{\D}.
\end{equation*}
Following the literature, $\nalg$ is assumed to be a finite module over $\calg$ throughout this work, so $\nalg$ is a PI ring. If $\nalg$ is furthermore prime and $n$ is its PI-degree, then $\Dn{k}=\MDn{k}=0$ for $k>n^2$ \cite[Corollay 2.4]{BY}. Since the determinant of a matrix can be computed using $\calg$-linear combination of determinants of smaller matrices 
\begin{equation*}
  \calg=\MDn{1}\supseteq\cdots\supseteq\MDn{k}\supseteq\MDn{k+1}\supseteq\cdots.
\end{equation*}
\begin{definition}
If $\algtr{\nalg}$ satisfies \cdk{d} for some $d$, define $\zeros_k$ as in \eq{eq:sd}; then
\begin{equation*}
  \varnothing=\zeros_{1}\subseteq\cdots\subseteq\zeros_{k}\subseteq\zeros_{k+1}\subseteq\cdots.
\end{equation*}
Recall \eq{eq:sd} and $\nalg$ is a finite module over $\calg$, so there is a smallest positive integer $h$ such that
\begin{equation*}
  \varnothing=\zeros_{1}\subseteq\cdots\subseteq\zeros_{h}\subsetneq\zeros_{h+1}=\maxspec{\calg}.
\end{equation*}
Then $\Dn{h}$ is called the \defn{the highest discriminant ideal}.
Similarly, there is a smallest $l$ such that
\begin{equation*}
  \varnothing=\zeros_{1}=\cdots\subsetneq\zeros_{l}\subseteq\cdots.
\end{equation*}
Then $\Dn{l}$ is called the \defn{the lowest discriminant ideal}.
\label{def:hl_dist}
\end{definition}

In a Hopf algebra $\halg$ over a field $\field$, the set of algebra homomorphisms from $H$ to $\field$ denoted $\algkk{\halg}$ is the same as the set of group-like elements in the finite dual of $H$ denoted by $\gpfdh{\halg}$ \cite[(1.3.5)]{Mont}.
%

Suppose $(\halg,\calg)$ satisfies \stcond, then $\calg$ is a finitely generated commutative Hopf algebra and hence the coordinate ring of an affine algebraic group, i.e. there is a multiplication defined on $\maxspec{\calg}$. Denote the subset of $\gpfdh{\halg}$ that vanishes on $\idc$ by $G_0$. There is an action of $G_0$ on $\irr(\halg/\maxi\halg)$ defined by \cite[Section 3.1]{MWY2023}
\begin{equation}
  (\chi,V)\mapsto \chi\otimes_{\field} V\quad\text{for}\;\chi\in G_0, V\in\irr(\halg/\maxi\halg).
  \label{eq:G_0-action}
\end{equation}
\begin{definition}
Define the \defn{stabilizer} of $V\in\irr(\halg/\maxi\halg)$ under the this action of $G_0$ as
  \begin{equation*}
  \stab{V}\coloneqq \{\chi\in G_0:\chi\otimes V\cong V\}
  \end{equation*}
It can be shown that \cite[Proposition 3.5(b)]{MWY2023}
\begin{equation}
  \ordergp{\stab{V}}{}\le\dim(V)^2
  \label{eq:stab-le}
\end{equation}
The irreducible module $V$ is \defn{maximally stable} if the equality holds.
\label{def:max-stab}
\end{definition}
If $\halg/\idc\halg$ is basic, the group $G_0$ acts transitively on $\irr(\halg/\maxi\halg)$ \cite[Theorem 3.1(c)]{MWY2023}, hence all irreducible modules over $\maxi$ have the same dimension and we get
\begin{proposition}[Theorem B, 4.2~\cite{MWY2023}]
Let $(H,C,\tr)$ be a finitely generated Cayley-Hamilton Hopf algebra of degree $d$ over an algebraically closed field $\field$ of $\chark\field\notin[1,d]$. Suppose the identity fiber algebra $\halg/\idc \halg$ is basic.  Let $\maxi\in\maxspec{\calg}$ and $V\in\irr(\halg/\maxi\halg)$. Then
\begin{equation}
  \sd{\maxi}=\frac{\abs{G_0}\dim(V)^2}{\abs{\stab{V}}}.
\end{equation}
And the following are equivalent
\begin{enumerate}
  \item the maximal ideal $\maxi$ is in the zero set of the lowest discriminant ideal $\zeros_l$;
  \item there is a maximally stable $V\in\irr(\halg/\maxi\halg)$;
  \item any $V\in\irr(\halg/\maxi\halg)$ is maximally stable.
\end{enumerate}
\label{prop:mstable_l}
\end{proposition}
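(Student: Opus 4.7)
The plan is to chain the transitivity of the $G_0$-action on $\irr(\halg/\maxi\halg)$ furnished by the basic identity fiber hypothesis (the cited \cite[Theorem 3.1(c)]{MWY2023}) with the orbit-stabilizer theorem, the stabilizer bound \eq{eq:stab-le}, and the representation-theoretic characterization of $\sd{\maxi}$ in \eq{eq:sd}.

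For the formula in (1), I would first observe that tensoring by any $\chi\in G_0$ preserves module dimension, so the members of any $G_0$-orbit in $\irr(\halg/\maxi\halg)$ share a common dimension. Transitivity promotes this to the statement that $\dim W=\dim V=:n$ is constant across all of $\irr(\halg/\maxi\halg)$, and orbit-stabilizer then gives $\abs{\irr(\halg/\maxi\halg)}=\abs{G_0}/\abs{\stab{V}}$. Plugging into \eq{eq:sd} yields
\[
\sd{\maxi}\;=\;\sum_{W\in\irr(\halg/\maxi\halg)}\dim(W)^2\;=\;\frac{\abs{G_0}}{\abs{\stab{V}}}\cdot n^2,
\]
which is the desired identity.

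For the equivalences in (2), I would feed the bound $\abs{\stab{V}}\le\dim(V)^2$ from \eq{eq:stab-le} into the formula just proved to obtain $\sd{\maxi}\ge\abs{G_0}$, with equality exactly when $V$ is maximally stable. Because stabilizers of points in a single $G_0$-orbit are conjugate and hence of equal order, the equality criterion does not depend on the choice of $V\in\irr(\halg/\maxi\halg)$, so (ii) $\Leftrightarrow$ (iii). The minimum of $\sd{-}$ on $\maxspec{\calg}$ is therefore $\abs{G_0}$, attained precisely at the maximally stable fibers, so the definition of $\zeros_k$ in \eq{eq:sd} forces the lowest level to be $l=\abs{G_0}+1$ and gives $\maxi\in\zeros_l \iff \sd{\maxi}=\abs{G_0} \iff V$ is maximally stable, which is (i) $\Leftrightarrow$ (ii)/(iii).

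The only nontrivial ingredient is the transitivity of the $G_0$-action on $\irr(\halg/\maxi\halg)$, which is imported as a black box from \cite{MWY2023} and encodes the representation-theoretic content driving the whole statement. Once transitivity is in hand the rest is orbit-stabilizer bookkeeping together with unwinding the definitions of maximal stability in \deff{def:max-stab} and of $\zeros_k$ in \eq{eq:sd}; the only subtlety worth flagging is the observation that both $\dim V$ and $\abs{\stab{V}}$ are orbit invariants, which makes the three conditions in (2) genuinely fiber-level rather than module-level statements.
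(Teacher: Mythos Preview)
The paper does not supply its own proof of this proposition: it is imported verbatim from \cite{MWY2023} (as the bracketed attribution ``Theorem~B, 4.2'' signals), and the only in-text justification is the single sentence preceding the statement, namely that transitivity of the $G_0$-action on $\irr(\halg/\maxi\halg)$ from \cite[Theorem~3.1(c)]{MWY2023} forces all irreducibles over $\maxi$ to share a common dimension. Your argument is precisely the natural expansion of that hint via orbit--stabilizer and the bound \eq{eq:stab-le}, so you are in line with what the paper indicates.

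There is, however, one real gap in your write-up. You assert that ``the minimum of $\sd{-}$ on $\maxspec{\calg}$ is therefore $\abs{G_0}$'' and hence $l=\abs{G_0}+1$, but the preceding steps only give the lower bound $\sd{\maxi}\ge\abs{G_0}$ together with the statement that equality characterizes maximally stable fibers. To conclude that $\abs{G_0}$ is actually the minimum (and not a strict lower bound), you must exhibit at least one maximally stable fiber; otherwise $\maxi\in\zeros_l$ only says $\sd{\maxi}=l-1$ equals the true minimum, which could a priori exceed $\abs{G_0}$, and the equivalence (i)$\Leftrightarrow$(ii) would collapse. The fix is immediate: take $\maxi=\idc$. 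By hypothesis $\halg/\idc\halg$ is basic, so every $V\in\irr(\halg/\idc\halg)$ is one-dimensional; for such $V$ one has $\chi\otimes V\cong V$ only when $\chi$ is the counit, so $\abs{\stab{V}}=1=\dim(V)^2$ and $V$ is maximally stable. This gives $\sd{\idc}=\abs{G_0}$, closing the argument.
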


\begin{definition}
Let $\phi\in\gpfdh{\halg}$, the \defn{left winding} ($\lw$) and \defn{right winding} ($\rw$) \defn{automorphisms} in $\autk(\halg)$ are defined by 
\begin{equation*}
  \lw(\phi)(h)=\sum \phi(h_1)h_2,\quad \rw(\phi)(h)=\sum \phi(h_2)h_1,\quad \forall h\in\halg.
\end{equation*}

\label{defn:winding}
\end{definition}
One of the features of winding automorphisms is that both of the automorphism groups $\lw(\gpfdh{\halg})$ and $\rw(\gpfdh{\halg})$ act transitively on kernels of elements in $\gpfdh{\halg}$ or equivalently the set of one-dimensional representations of $\halg$. This follows from the fact the $\gpfdh{\halg}$ is a group under convolution and
\begin{equation*}
  \psi\circ\lw(\phi)=\phi*\psi,\quad \psi\circ\rw(\phi)=\psi*\phi,\quad \forall \phi,\,\psi\in\gpfdh{\halg}.
\end{equation*}
Winding automorphisms are very important in the discussions about homological integrals \cite{LWZ} and dualising complexes \cite{BZ}.

\begin{definition}
Let $\halg$ be a Hopf algebra over a field $\field$ and $\field$ denote the one-dimensional bimodule with actions given by the counit. Then $\halg$ is \defn{ left AS-Gorenstein} if
\begin{enumerate}
  \item the left module $_{\halg} \halg$ has injective dimension $d$;
  \item and\begin{equation}\dim_{\field} \Ext_{\halg}^i(_{\halg} k,_{\halg} \halg)=\begin{cases} 1 \quad i=d,\\0\quad i\ne d.\end{cases}
        \end{equation}
\end{enumerate}
Similarly we can define \defn{right AS-Gorenstein} for the right $H$-module structure. A Hopf algebra $H$ is \defn{AS-Gorenstein} if it is both left and right AS-Gorenstein.
\end{definition}
For Noetherian PI Hopf algebras, these two conditions are equivalent with the same $d$ \cite[Theorem 3.2]{WZ2002}. If a pair $(\halg,\calg)$ satisfies \stcond, then $\halg$ is AS-Gorenstein \cite[Theorem 3.5]{WZ2002}.

\begin{definition}
Let $\halg$ be an AS-Gorenstein Hopf algebra over the field $\field$, then the \defn{left homological integral} is the one-dimensional $\halg$-bimodule 
\begin{equation*}
  \int^l=\Ext^d_{\halg}(_{\halg}\field,_{\halg} \halg)
\end{equation*}
and similary the \defn{right homological integral} is defined by 
\begin{equation*}
  \int^r=\Ext^d_{\halg^{\opp}}(\field_{\halg}, \halg_{\halg})
\end{equation*}
where $\halg^{\opp}$ is the opposite ring.
\end{definition}
\begin{definition}
The \defn{integral order} of an AS-Gorenstein Hopf algebra is the smallest positive integer $n$ making $(\int^r)^{\otimes n}\cong \field$ as $\halg$-bimodules and $\infty$ if not such $n$ exists.
\end{definition}
\begin{remark}
Let $(H,C)$ satisfy \stcond, then the antipode $S$ of $\halg$ is bijective \cite{Sk}, thus $S(\int^r)=\int^l$ and $S(\int^l)=\int^r$. Consequently, $\intord(\halg)=1 \Leftrightarrow \int^r\cong\field \Leftrightarrow \int^l\cong\field$.
\end{remark}

\section{The Second Cohomology Group and Fiber Algebras}
In this section, we use the second group cohomology to understand the fiber algebras of group algebras of central extensions of finite groups and then prove \thm{tthm:dist} and \coro{tcor:basic}. A friendly introduction to the second cohomology group can be found in \cite{Men} and more detailed expositions are availabel in its references including \cite{KAR}.
\begin{definition}
Let $G$ be a group, and $\cexta$ be an abelian group in additive notation. A function $\tccl:G\times G\to \cexta$ is called a \emph{2-cocycle} if
\begin{equation*}
  \tccl(h,k)-\tccl(gh,k)+\tccl(g,hk)-\tccl(g,h)=0\quad\forall g,h,k\in G
\end{equation*}
and a \emph{$2$-coboundary} if there a function $f:G\to \cexta$ such that
\begin{equation*}
  \tccl(g,h)=f(h)-f(gh)+f(g)\quad\forall g,h\in G.
\end{equation*}
\end{definition}
Denote the set of 2-cocyles and 2-coboundaries by $\cocyl{G}{\cexta}$ and $\cobnd{G}{\cexta}$, respectively. These have structures of abelian groups from $\cexta$. And the \emph{2nd-cohomology group} is defined as the quotient group
\begin{equation*}
  \cohom{G}{\cexta}=\frac{\cocyl{G}{\cexta}}{\cobnd{G}{\cexta}}.
\end{equation*}
Two 2-cocyles are called \emph{cohomologous} if they are in the same cohomology class $\cohom{G}{\cexta}$. Every 2-cocycle satisfies
\begin{align}
  \tccl(1,g)&=\tccl(1,1)=\tccl(g,1) \quad \text{and}\\
  \tccl(g,g^{-1})&=\tccl(g^{-1},g)\quad\forall g\in G
  \label{eq:normal_2}
\end{align}
and is cohomologous to a normalized one satisfying $\tccl(1,1)=1$.



\begin{definition}
Let $\cextb$ be a group, $\cexta$ be an abelian group, then a \defn{central extenstion of $\cextb$ by $\cexta$} is a short exact sequence of groups
\begin{equation}
  1\longrightarrow \cexta\overset{i}{\longrightarrow} \cexte\overset{p}{\longrightarrow} \cextb\longrightarrow 1
  \label{eq:gr_cext}
\end{equation}
such that the image of $i$ is in the center of $\cexte$.
\end{definition}
\begin{proposition}
Let $\halg$ be a finitely generated cocommutative Hopf algebra over an algebraically closed field of characteristic zero and module finite over be a central Hopf subalgebra $\calg$. Then there is a central extension $\cexte$ of a finite group $\cextb$ by a finitely generated abelian group $\cexta$ and a nonnegative integer $n$ such that $\halg=\field \cexte\otimes \field[x_1,\cdots,x_n]$ and $\calg=\field \cexta\otimes \field[x_1,\cdots,x_n]$.
\label{prop:general}
\end{proposition}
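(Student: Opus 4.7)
The plan is to apply the Cartier--Kostant--Milnor--Moore (CKMM) structure theorem and then extract the required tensor decomposition from a comparison of Gelfand--Kirillov dimensions. By CKMM, the cocommutative Hopf algebra $\halg$ decomposes as a smash product $\halg \cong U(\g) \# \field G$, where $\g = P(\halg)$ is the Lie algebra of primitives and $G$ is the group of group-likes of $\halg$, with $G$ acting on $\g$ by conjugation. The same theorem applied to the commutative and cocommutative Hopf algebra $\calg$ gives $\calg \cong U(\g_0) \otimes \field\Lambda$ with $\g_0 = P(\calg)$ abelian and $\Lambda = G(\calg)$ abelian (the action is trivial because $\calg$ is commutative). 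Finite generation of $\halg$ forces $\dim\g < \infty$ and $G$ finitely generated, while finite generation of $\calg$ (via the Artin--Tate lemma) forces $\dim\g_0 < \infty$ and $\Lambda$ finitely generated.

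Since $\halg$ is module-finite over the commutative algebra $\calg$, it is PI, and so is the subalgebra $U(\g)$; in characteristic zero this forces $\g$ to be abelian, so $U(\g) \cong \field[x_1,\dots,x_n]$ with $n = \dim\g$. Centrality of $\calg$ in $\halg$, read off on primitives and group-likes separately, yields $\g_0 \subseteq \g^G$ and $\Lambda \subseteq Z(G) \cap K$, where $K := \ker\bigl(G \to \aut(\g)\bigr)$. Next I would use the Hopf surjection $\pi \colon \halg \twoheadrightarrow \halg/U(\g)^+\halg \cong \field G$; its restriction sends $\calg$ onto $\field\Lambda$, and since $\halg$ is a finite $\calg$-module, $\field G$ is a finite $\field\Lambda$-module. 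Hence $[G:\Lambda] < \infty$ and in particular $[G:K] < \infty$.

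The crucial step is the GK-dimension comparison. Because $K$ acts trivially on $\g$ by definition, the subalgebra $U(\g)\#\field K$ of $\halg$ is just the tensor product $U(\g)\otimes\field K$, and decomposing $G$ into left cosets of the normal subgroup $K$ exhibits $\halg$ as a free right $(U(\g)\otimes\field K)$-module of rank $[G:K]$. Using that $K$ is virtually abelian (contains $\Lambda$ of finite index), so $\gkdim \field K = r(\Lambda)$, we obtain
\begin{equation*}
  \gkdim \halg \;=\; \gkdim\bigl(U(\g)\otimes\field K\bigr) \;=\; \dim\g + r(\Lambda),
\end{equation*}
where $r(\Lambda)$ denotes the torsion-free rank of $\Lambda$. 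On the other hand, module-finiteness over $\calg$ simultaneously gives $\gkdim \halg = \gkdim \calg = \dim\g_0 + r(\Lambda)$, so $\dim\g_0 = \dim\g$ and hence $\g_0 = \g$. Combined with $\g_0 \subseteq \g^G \subseteq \g$, this forces $\g^G = \g$, so $G$ acts trivially on $\g$ and the smash product collapses to $\halg = U(\g)\otimes\field G$; likewise $\calg = U(\g)\otimes\field\Lambda$.

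Finally, setting $\cexte := G$, $\cexta := \Lambda$, and $\cextb := G/\Lambda$, the conditions $\Lambda \subseteq Z(G)$ and $[G:\Lambda] < \infty$ exhibit $\cexte$ as a central extension of the finite group $\cextb$ by the finitely generated abelian group $\cexta$, yielding the required identifications $\halg = \field\cexte\otimes\field[x_1,\dots,x_n]$ and $\calg = \field\cexta\otimes\field[x_1,\dots,x_n]$. The main obstacle is the GK-dimension computation in the presence of a potentially nontrivial $G$-action on $\g$; the unblocking observation is that $\Lambda \subseteq K$ is of finite index in $G$, which allows the passage to the subalgebra $U(\g)\otimes\field K$ \emph{before} the triviality of the action has been established.
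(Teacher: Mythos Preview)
Your proof is correct and follows the same overall strategy as the paper: apply the Cartier--Gabriel--Kostant theorem to write $\halg\cong U(\g)\#\field G$, argue that $\g$ must be abelian, and then show that $U(\g)$ is actually central in $\halg$ so that the smash product collapses to a tensor product.

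The difference lies in how the centrality of $U(\g)$ is obtained. The paper dispatches this step in one sentence, citing \cite[5.1.6]{Mont} and \cite[Lemma~7.2]{Zhuang} for the fact that $U(\g)$ admits no central Hopf subalgebra over which it is module-finite, and concludes directly that $\g$ is abelian and $U(\g)$ lies in the center of $\halg$. You instead give a self-contained argument: first use the PI property to get $\g$ abelian, then pass to the finite-index subgroup $K=\ker(G\to\aut(\g))$ and compare $\gkdim\halg=\dim\g+r(\Lambda)$ with $\gkdim\calg=\dim\g_0+r(\Lambda)$ to force $\g_0=\g$, hence $\g^{G}=\g$. This buys you two things the paper's proof leaves implicit: it avoids the external references, and it simultaneously identifies the primitive part of $\calg$ with that of $\halg$, so that the decomposition $\calg=\field\cexta\otimes\field[x_1,\dots,x_n]$ (with the \emph{same} polynomial factor as in $\halg$) comes out automatically rather than from a second invocation of CKMM.
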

\begin{proof}
By Artin-Tate Lemma, $\calg$ is a finitely generated $\field$-algebra and thus Noetherian by Hilbert's Basis Theorem and; $\halg$ is also Noetherian as a finite module over $\calg$. By Cartier-Gabriel-Kostant Theorem, $\halg\cong \field \cexte \ltimes U(\g)$ for some group $\cexte$ and finite dimensional Lie algebra $\g$. Now $U(\g)$ is infinite-dimensional and there is no central Hopf subalgebra of $U(\g)$ over which $U(\g)$ is a finite module \cite[5.1.6]{Mont}\cite[Lemma 7.2]{Zhuang}, so $\g$ is abelian and $U(\g)$ is in the center of $\halg$. This shows $\halg\cong\field\cexte\otimes\field[x_1,\cdots,x_n]$ for some nonnegative integer $n$. Now $\field \cexte$ is Noetherian and a finite module over a central Hopf subalgebra. Applying Artin-Tate Lemma and Cartier-Gabriel-Kostant Theorem again, we prove the result.
\end{proof}
Let $\tccl\in\cocyl{\cextb}{\cexta}$
and $G_{\tccl}$ be a set with multiplication given below
%
\begin{align}
   G_{\tccl} &\coloneqq\{(m,g):m\in \cexta, \,g\in \cextb\},\label{eq:g_ext_a}\\
  (m_1,g_1)(m_2,g_2) &\coloneqq(m_1+m_2+\tccl(g_1,g_2),g_1 g_2), \quad m_i\in \cexta,g_i\in \cextb.\label{eq:g_ext_b}
\end{align}
Then $G_{\tccl}$ is a central extension of $\cextb$ by $\cexta$ and two such extensions are equivalent if and only if the associated 2-cocyles are cohomologous.
We can associate a $\tccl\in\cocyl{\cextb}{\cexta}$ with the central extension in \eq{eq:gr_cext} as follows: fix $f$ that is a section of $p$ as a set map and define $\tccl:\cextb\times \cextb\mapsto \cexta$ by $\tccl(g_1,g_2)=i^{-1}(f(g_1)f(g_2)f(g_1 g_2)^{-1})$ for all $g_1,g_2\in \cextb$. Then $G_{\tccl}$ is a central extension of $\cextb$ by $\cexta$ equivalent to $\cexte$.

\begin{example}
Consider the central extension of $\Zn{2}$ by $\bbz$ given by
\begin{equation*}
  1\longrightarrow \bbz\overset{\times 2}{\longrightarrow} \bbz\overset{p}{\longrightarrow} \Zn{2}\longrightarrow 1
\end{equation*}
The function $f:\Zn{2}\mapsto \bbz$ defined by $f(0)=0$ and $f(1)=1$
is a section of $p$ and induces $\tccl\in\cocyl{\Zn{2}}{\bbz}$ given by
\begin{equation*}
  \tccl(i,j)=\begin{cases} 1\quad\text{if}\;i+j=2\\0\quad\text{otherwise}. \end{cases}
\end{equation*}
The group $G_{\tccl}$ is then an infinite cyclic group isomorphic to $\bbz$ generated by $(0,1)$ as follows:
\begin{equation*}
  n(0,1)=(n/ 2,n\pmod2)\quad \forall n\in \bbz.
\end{equation*}
\end{example}


\begin{definition}

Let $G$ be a group and $\tccl\in\cocyl{G}{\fieldx}$. The \defn{twisted group algebra of $G$ by $\tccl$} denoted $\field_{\tccl} G$ is a vector space over $\field$ with basis $\{g\}_{g\in G}$ and multiplication defined by
\begin{equation}
  g_1 \cdot g_2=\bar{\tccl}(g_1,g_2)g_1 g_2\quad\forall g_1,g_2\in G.
  \label{eq:twisted_gp_alg}
\end{equation}
The projective representations of $G$ correspond to modules over twisted groups algebras of $G$. Two twisted group algebras $\field_\beta G$ and $\field_\beta$ are \defn{equivalent} if there is an isomorphism of algebras $\phi:\field_\beta G\mapsto\field_\gamma G$ and there is a map $t:G\mapsto \fieldx$ s.t. $\phi(g)=t(g)g$ for any $g\in G$. And this happens if and only if the 2-cocyles are cohomologous.
\end{definition}
\begin{remark}
A function $\tccl:\cextb\times \cextb\mapsto \cexta$ is a 2-cocyle if and only if \eq{eq:g_ext_b} defines an associative multiplication. Similarly associativity of the operation defined in \eq{eq:twisted_gp_alg}  is equivalent to the  map $\tccl:\cextb\times \cextb\mapsto \field$ being a 2-cocycle is $\cocyl{\cextb}{\fieldx}$.
\label{rm:2co-twgp}
\end{remark}
\begin{proof}[Proof of \thm{tthm:dist}]
\hfill
\par
\ref{tthm:dist:CH} The algebra $\halg$ is a free $\calg$-module of rank $\abs{\cextb}$ as the group $\cexte$ has $[\cexte:\cexta]=\abs{\cextb}$ left or right cosets of $\cextb$. This part is now clear from \eg{ex:reg-tr}.

\ref{tthm:dist:di} Let $\maxi\in \maxspec \calg$, since $\calg$ is affine and $\field$ is algebraically closed, $\calg/\maxi\cong\field$. The 2-cocyle $\tccl\in\cocyl{\cextb}{\cexta}$ associated with the central extension descends to a 2-cocyle $\bar{\tccl}\in\cocyl{\cextb}{\fieldx}$ by \rmk{rm:2co-twgp}. This means $\halg/\maxi\halg$ is isomorphic to the twisted group algebra $\field_{\bar{\tccl}} \cextb/A$. Now the twisted group algebra of a finite group $\cextb$ by a 2-cocyle $\tccl\in\cocyl{\cextb}{\fieldx}$ is semsimple whenever $\chark\field$ does not divide $\abs{\cextb}$ \cite[Theorem 2.2]{Pass}. Thus for any $\maxi\in\maxspec{\calg}$, the fiber algebra $\halg/\maxi \halg$ is always semisimple and 
\begin{equation*}
\sd{\maxi}=\dim_{\field}\halg/\maxi\halg=[G:A]
\end{equation*}
The zero sets of the discriminant ideals can now be computed using \eq{eq:sd}. 

\ref{tthm:dist:io} Let $c$ be one of the standard generators of the torsion-free part of the abelian group $i(\cexta)$ in the center of $\cexte$ and $I=\generate{c-1}$. Then $\halg/I\cong \field (G/\generate{c})$ is a Hopf algebra over $\field$. Note $c-1$ is a nonzero divisor of $\halg$ since $\halg$ is $\bbz$-graded as a $\field[c,c^{-1}]$-module. Hence $\intord(\halg)=\intord(\halg/I)$ \cite[Lemma 2.6]{LWZ}. Repeating this process, we end up with $\intord(\halg)=\intord(\field \bar{G})$ for a finite group $\bar{G}$ and $\intord(\halg)=\intord(\field \bar{G})=1$ since $\field\bar{G}$ is semisimple by Maschke's Theorem.
\end{proof}

\begin{proof}[Proof of \coro{tcor:basic}]
\hfill
\par
\ref{tcor:basic:semi} 
 The result follows from the transitivity of the action of the group $G_0$ on $\irr(\halg/\maxi\halg)$ and \thm{tthm:dist}\ref{tthm:dist:di}.

\ref{tcor:basic:stab} This part is a consequence of \thm{tthm:dist}\ref{tthm:dist:di} and \prop{prop:mstable_l}.

\ref{tcor:basic:orbit}
Associate a $\tccl\in\cocyl{\cextb}{\cexta}$ with the central extension of $\cextb$ by $\cexta$ and define $A$ as the subgroup of $\cexta$ generated by elements in the set 
\begin{equation*}
  \{\tccl(g_1,g_2)-\tccl(g_2,g_1)\},\quad g_1,g_2\in\cextb.
\end{equation*}
Let $\maxi\in\maxspec{\calg}$, then $\halg/\maxi\halg$ is semisimple by \thm{tthm:dist}\ref{tthm:dist:di}. Hence it is basic if and only if it is commutative or equivalently every element in the group $A$ is sent to the number $1$ by the natural projective $\halg\to\halg/\maxi\halg$.
\end{proof}

\section{Central Extensions of Finitely Generated abelian Groups}
\label{sec:central-ext}
In this section we give another proof of \thm{tthm:dist}\ref{tthm:dist:di} for group algebras of central extensions of abelian groups and compute explicitly the orbits in $\maxspec{\calg}$ under different automorphism groups.

Let
\begin{equation*}
  1\longrightarrow \agp\longrightarrow \egp\longrightarrow \bgp\longrightarrow 1
\end{equation*}
be a central extension of a finitely generated abelian group  $\bgp$ by a finite abelian group $\agp$. Recall $\egp \cong G_{\tccl}$ for some $\tccl\in\cocyl{\bgp}{\agp}$. Let $\{x_i\}_{1\le i\le n}$ be the standard generators of $\bgp$ in the invariant factor form and define $a_{ij}=\tccl(x_j,x_i)-\tccl(x_i,x_j)$ then
\begin{align*}
  a_{ij}&=a_{ji}^{-1} \quad\text{and}\\
  x_i \cdot x_j&=a_{ij} x_j \cdot x_i.
\end{align*}
Denote by  $\ordergp{\cdot}{\agp}$ the order of an element in $\agp$, set
\begin{equation*}
  l_i=\underset{1\le j \le n}{\lcm} \ordergp{a_{ij}}{\agp}\quad\forall 1\le i\le n.
\end{equation*}
The set $\{\Im\tccl,x_1^{l_1},\cdots, x_n^{l_n}\}$ is in the center $Z(\egp)$. Pick a subgroup $\cgp\lhd Z(\egp)$ containing this set, then $\abs{\egp:\cgp}<\infty$ and $G_0\cong\egp/\cgp$. In particular there is also a central extension
\begin{equation*}
  1\longrightarrow \cgp\longrightarrow G_{\tccl}\longrightarrow G_0\longrightarrow 1.
\end{equation*}
Define $\halg=\field\egp$ and $\calg=\field \cgp$; the pair $(H,C)$ satisfies \stcond. By \thm{tthm:dist}\ref{tthm:dist:di}, the zero set of the $k$-th discriminant ideal is empty if $k\le\abs{\egp:\cgp}$ and the whole $\maxspec C$ otherwise.

The result can also be deduced from \prop{prop:mstable_l}. By \thm{tthm:dist}\ref{tthm:dist:di} $\halg/\maxi\halg\cong\field_{\gamma} G_0$ for a 2-cocycle $\gamma\in\cocyl{G_0}{\fieldx}$. The images $\bar{x}_i$ of the generators $x_i$ under the natural projection $\egp\mapsto \egp/\cgp$ can be extended to a set of standard generators (possibly with $0's$) of $\egp/\cgp\cong G_0$ in the invariant factor form $\{\bar{x}_1,\cdots,\bar{x}_{n+s}\}$. Note in $\halg/\maxi\halg$ elements in $\cgp$ correspond to elements in $\fieldx$. Thus the 2-cocyle $\gamma$ can be defined as:
\begin{equation*}
 \gamma(\bar{x}_i,\bar{x}_j)=\begin{cases}
                               \tccl(x_i,x_j)+\maxi\in\fieldx\quad \forall 1\le i,j\le n,\\
			       1\quad \text{otherwise}.
                             \end{cases}
\end{equation*}
As the group $G_0$ is a finite abelian group, each $a_{ij}$ descends to a root of unity $\bar{a}_{ij}$. There is a root of unity $\runit$ of some order $\orderu$ that generates all $\bar{a}_{ij}$ and the multiplication on $\field_{\gamma}G_0$ can be represented by a matrix $\cmmtx(\xi)\in\mtx{n+s}{\Zn{\orderu}}$ dependent on the choice of root of unity with entries defined by $\xi^{\cmmtxij_{ij}}=\bar{a}_{ij}$. The matrix $\cmmtx(\xi)$ is skew-symmetric; thus, by a change of $\bbz$-basis of $\cextb$ or switching to a new set of generators $\{y_1,\cdots,y_{n+s}\}=\{\phi(\bar{x}_1),\cdots,\phi(\bar{x}_{n+s})\}$ for some $\phi\in\autk(\halg/\maxi\halg)$, it can be written in the skew normal form \cite[Theorem IV.1]{MN1972}

\begin{equation}
\cmmtx(\xi)=\begin{bmatrix}
   0 & k_1 & & & & & &\\
   -k_1 & 0 & & & & & &\\
   & & \ddots & & & & &\\
   & & & 0 & k_s & & &\\
   & & & -k_s & 0 & & &\\
   & & & & & 0 & &\\
   & & & & & & \ddots &\\
   & & & & & & & 0\\
\end{bmatrix}.
\label{eq:N}
\end{equation}

The first step is to find one irreducible representation of $\halg/\maxi\halg$. Denote by $l_i$ the order of $k_i$ in $\Zn{\orderu}$. Let $I$ be the ideal of $\halg/\maxi\halg$ generated by $(y_{2s-1})^{l_i}-1$ and $\{y_{2s}^{l_i}-1\}$ for $1\le k\le s$ and $y_i-1$ for $i>2s$. Then
\begin{align*}
  (\halg/\maxi\halg)/I&= R_1\tensork \cdots\tensork R_s\quad \text{where}\\
  R_i&\cong\frac{\field\generate{x,y}}{(x^{l_i}-1,y^{l_i}-1,xy-\runit^{k_i}yx)}\quad\forall 1\le i\le s.
\end{align*}
We recall the following result in the simpliest case:
\begin{lemma}[Section 5.1 \cite{MWY2023}]
  Let $\tilde{\bgp}=\Zn{n}\times\Zn{n}$, $\tilde{\agp}=\Zn{n}$ and $\tilde{\egp}$ be a central extension of $\tilde{\bgp}$ by $\tilde{\agp}$ with associated $\tilde{\tccl}\in\cocyl{\tilde{\bgp}}{\tilde{\agp}}$ as follows
  \begin{equation*}
    \tilde{\tccl}(a,b)=c,\;\tilde{\tccl}(b,a)=0
  \end{equation*}
where $a=(1,0),b=(0,1)\in\cextb$ and $c=1\in\cexta$ in additive notation. Define $\tilde{\halg}=\field \tilde{\egp}$ and $\tilde{\calg}=\field \tilde{\agp}$. Let $\tilde{\maxi}=(c-\runit)\in\maxspec{\tilde{\calg}}$ where $\runit$ is an $n$-th primitive root of unity. Then 
  \begin{enumerate}
    \item $\tilde{\halg}/\tilde{\maxi}\tilde{\halg}$ has an irreducible representation $V$ of dimension $n$; thus, it is simple;
    \item The stabilizer of $V$ is given by
    \begin{align*}
       \stab(V)=&\{\chi\in\gpfdh{\tilde{\halg}}:\; \chi(a)=\runit^i,\,\chi(b)=\runit^j, \, i,j\in\infcy\;\mathrm{mod}\; n\}\\
       \cong& \Zn{n}\times\Zn{n}.\nonumber
       \end{align*}
  \end{enumerate}
  \begin{proof}
    (i) The fiber algebra
    \begin{equation*}
      \tilde{\halg}/\tilde{\maxi}\tilde{\halg}\cong\frac{\field<x,y>}{(x^n-1,y^n-1,xy-\runit yx)}.
    \end{equation*}
    Let $V$ be a vector space with basis $\displaystyle \{v_0,\cdots,v_{n-1}\}$. Define
    \begin{equation*}
      x \cdot v_i=\runit^i v_i, \quad y \cdot v_i=v_j,\quad j=i+1 \;\text{mod}\; n.
    \end{equation*}
    Then $V$ is an n-dimensional representation of $\tilde{\halg}/\tilde{\maxi}\tilde{\halg}$.

    (ii) \cite[Section 5.1]{MWY2023}.
  \end{proof}
  \label{lm:dim_onepair}
\end{lemma}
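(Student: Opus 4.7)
The plan is to construct the $n$-dimensional module $V$ explicitly, verify irreducibility by a weight argument, and then deduce the description of the stabilizer from the fact that this forces $V$ to be the unique simple module of $\tilde{\halg}/\tilde{\maxi}\tilde{\halg}$.

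For part (i), I would take $V=\bigoplus_{i=0}^{n-1}\field v_i$ and define
\begin{equation*}
  x\cdot v_i=\runit^i v_i,\qquad y\cdot v_i=v_{(i+1)\bmod n}.
\end{equation*}
A direct check shows that $x^n$ and $y^n$ act as the identity and that $xy\cdot v_i=\runit^{i+1}v_{(i+1)\bmod n}=\runit\,(yx)\cdot v_i$, so the three defining relations of $\tilde{\halg}/\tilde{\maxi}\tilde{\halg}$ are respected. Irreducibility then follows by a weight argument: the operator $x$ has the $n$ distinct eigenvalues $1,\runit,\dots,\runit^{n-1}$, so every submodule is a sum of the eigenlines $\field v_i$; and since $y$ permutes these lines cyclically, only $0$ and $V$ itself are $y$-stable. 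Because $\chark\field\nmid n$, the twisted-group-algebra argument already used in the proof of \thm{tthm:dist}\ref{tthm:dist:di} shows that $\tilde{\halg}/\tilde{\maxi}\tilde{\halg}$ is semisimple; coupled with $\dim_{\field}(\tilde{\halg}/\tilde{\maxi}\tilde{\halg})=\abs{\tilde{\egp}:\tilde{\agp}}=n^2$ and the existence of the $n$-dimensional simple module $V$, Artin--Wedderburn forces $\tilde{\halg}/\tilde{\maxi}\tilde{\halg}\cong\mtx{n}{\field}$, so $V$ is up to isomorphism the unique simple module and the fiber algebra is simple.

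For part (ii), I would first describe $G_0$ concretely. Since $\tilde{\agp}=\generate{c}$, the counit kernel is $\idc=(c-1)$, so $G_0=\{\chi\in\gpfdh{\tilde{\halg}}:\chi(c)=1\}$. The relation $aba^{-1}b^{-1}=c$ in the Heisenberg-type group $\tilde{\egp}$ is sent to $1$ by any character into the abelian group $\fieldx$, which automatically forces $\chi(c)=1$; combined with $a^n=b^n=1$ a character is therefore freely determined by $\chi(a),\chi(b)\in\{\runit^i\}_{0\le i<n}$. This produces exactly $n^2$ characters and gives $G_0\cong\Zn{n}\times\Zn{n}$ with the parametrization stated in the lemma. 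By the uniqueness conclusion of part (i), for every $\chi\in G_0$ the twist $\chi\otimes V$ is again an irreducible module of the same fiber algebra and so must be isomorphic to $V$; hence $\stab{V}=G_0\cong\Zn{n}\times\Zn{n}$.

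The only subtle point is checking that, for $\chi\in G_0$, the twist $\chi\otimes V$ actually descends to a module over the same fiber $\tilde{\halg}/\tilde{\maxi}\tilde{\halg}$ (not over a shifted maximal ideal); for a group algebra this is transparent, because $\chi\otimes V$ acts on $g\in\tilde{\egp}$ by $\chi(g)V(g)$, and $\chi(c)=1$ guarantees that $c$ still acts as $\runit$. As a consistency check, the general bound $\abs{\stab{V}}\le\dim(V)^2=n^2$ from \eq{eq:stab-le} is saturated by $\abs{G_0}=n^2$, so $V$ is maximally stable in the sense of \deff{def:max-stab}, in agreement with \coro{tcor:basic}\ref{tcor:basic:stab}.
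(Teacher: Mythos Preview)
Your proof is correct. For part (i) you reproduce the paper's explicit module construction verbatim and then supply the irreducibility check (eigenvalues of $x$ plus the cyclic action of $y$) and the Artin--Wedderburn step that the paper leaves implicit; this is exactly the intended argument, only with more detail filled in. For part (ii) the paper simply cites \cite[Section~5.1]{MWY2023}, whereas you give a self-contained argument: having shown in (i) that the fiber algebra is simple, its unique irreducible must be fixed by every $\chi\in G_0$, so $\stab{V}=G_0$. Your route is cleaner here because it avoids the external reference entirely, at the cost of relying on the simplicity conclusion of (i); the cited computation in \cite{MWY2023} instead exhibits explicit intertwiners and does not need Artin--Wedderburn. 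One small wording caution: rather than asserting $a^n=b^n=1$ in $\tilde{\egp}$ (which depends on the normalization of the cocycle), it is safer to say $a^n,b^n\in\generate{c}$, which together with $\chi(c)=1$ already forces $\chi(a)^n=\chi(b)^n=1$.
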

By \lm{lm:dim_onepair}, the quotient $(\halg/\maxi\halg)/I$ is simple as a tensor product over $\field$ of simple algebras whose centers are all $\field$ \cite[Theorem 1.7.27]{RW1}; thus, $\halg/\maxi\halg$ has an irreducible representation $V$ of dimension $\prod_{1\le i\le s} l_i$ and
\begin{equation*}
  L=(\Zn{l_1})^2\times\cdots\times(\Zn{l_s})^2\subset\stab{V}.
\end{equation*}
As $\bgp$ is abelian, $\halg/\idc \halg$ is basic; by \eq{eq:stab-le} $L=\stab{V}$. Thus from \prop{prop:mstable_l} we recover \thm{tthm:dist}\ref{tthm:dist:di}.

We note that the generators $\bar{x}_i$ can be chosen directly from $\cextb$. The results are summarized below
\begin{proposition}
Let $\egp$ be a central extension of a finitely generated abelian group $\bgp$ by a finite abelian group $\agp$. There exits a central subgroup $\cgp\lhd\egp$ of finite index. Define $\halg=\field \egp$ and $\calg=\field\cgp$. Let $\{x_1,\cdots,x_n\}$ be a set of standard generators of the quotient group $\cextb=\egp/\cgp$ and $\maxi\in\maxspec{\calg}$. Then $\halg/\maxi\halg\cong \field_{\tccl} \cextb$ for some $\tccl\in\cocyl{\cextb}{\fieldx}$. Furthermore, there is $\phi\in\autk(\halg/\maxi\halg)$ and a root of unity $\xi$ of some integer order $\orderu$ such that another set of generators $\{y_1,\cdots,y_{n}\}=\{\phi(\bar{x}_1),\cdots,\phi(\bar{x}_{n})\}$ satisfy $y_i\cdot y_j=\cmmtxij_{ij} y_j \cdot y_i$ for a matrix $\cmmtx\in\mtx{n}{\Zn{\orderu}}$ in the form of \eq{eq:N}. Let $l_i$ be the order of $k_i\in\Zn{\orderu}$, and $V\in\irr(\halg/\maxi\halg)$ then
\begin{align*}
  \stab{V}&\cong(\Zn{l_1})^2\times\cdots\times(\Zn{l_s})^2\quad\text{and}\\
  \dim V&=\prod_{i=1}^s l_i.
\end{align*}
\label{prop:c-ext}
\end{proposition}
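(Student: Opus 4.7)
The plan is to organize the construction developed in the paragraphs preceding the proposition into a self-contained argument; essentially all the moving parts are already in place.

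First I would identify the fiber. By \thm{tthm:dist}\ref{tthm:dist:di}, once a central subgroup $\cgp$ of $\egp$ of finite index has been chosen (as in the paragraph following \rmk{rm:2co-twgp}), every fiber $\halg/\maxi\halg$ is a twisted group algebra $\field_{\bar\tccl}\cextb$ for a 2-cocycle $\bar\tccl\in\cocyl{\cextb}{\fieldx}$ obtained by reducing the group-theoretic cocycle $\tccl$ modulo $\maxi$. For any standard generating set $\{x_1,\ldots,x_n\}$ of the finite abelian group $\cextb$, the commutation scalars $a_{ij}$ defined by $x_i x_j = a_{ij} x_j x_i$ inside $\field_{\bar\tccl}\cextb$ are roots of unity in $\field$ (their orders divide $|\cextb|$), so they can be expressed simultaneously as $a_{ij} = \xi^{m_{ij}}$ for a single root of unity $\xi$ of some order $\orderu$, producing a skew-symmetric matrix $\cmmtx = (m_{ij}) \in \mtx{n}{\Zn{\orderu}}$.

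Next I would apply the integer skew normal form theorem \cite[Theorem IV.1]{MN1972} to $\cmmtx$: there is $P \in GL_n(\bbz)$ such that $P\cmmtx P^{\mathrm{tr}}$ has the block form \eqref{eq:N}. Interpreting $P$ as a change of $\bbz$-basis of $\cextb$, it induces a group automorphism of $\cextb$ and hence a $\field$-algebra automorphism $\phi$ of $\field_{\bar\tccl}\cextb \cong \halg/\maxi\halg$ (after replacing $\bar\tccl$ by a cohomologous cocycle if necessary, which preserves the isomorphism class of the twisted group algebra). The new generators $y_i = \phi(\bar{x}_i)$ then satisfy the asserted relations $y_i y_j = \xi^{m_{ij}} y_j y_i$ with matrix in skew normal form.

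Then I would compute $\dim V$ and $\stab V$ exactly as in the discussion preceding the proposition. Factoring out the ideal $I$ generated by $y_{2i-1}^{l_i}-1$ and $y_{2i}^{l_i}-1$ for $1\le i \le s$ together with $y_j - 1$ for $j > 2s$ yields a tensor product $R_1 \tensork \cdots \tensork R_s$ of algebras of the type treated in \lm{lm:dim_onepair}, each simple with irreducible module of dimension $l_i$. By \cite[Theorem 1.7.27]{RW1} this tensor product is simple, producing an irreducible $\halg/\maxi\halg$-module $V$ of dimension $\prod_{i=1}^{s} l_i$; \lm{lm:dim_onepair}(ii) applied in each tensor factor then gives the inclusion $(\Zn{l_1})^2 \times \cdots \times (\Zn{l_s})^2 \subseteq \stab V$. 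Equality follows from the bound \eqref{eq:stab-le}, which must be saturated because $\cextb$ abelian makes $\halg/\idc\halg$ basic, and \coro{tcor:basic}\ref{tcor:basic:stab} guarantees that every irreducible module is maximally stable. The main technical subtlety is the second step: verifying that a $GL_n(\bbz)$-change of basis on $\cextb$ translates into an honest $\field$-algebra automorphism of $\field_{\bar\tccl}\cextb$ rather than merely an isomorphism onto a cohomologous twisted group algebra. Since cohomologous twisted group algebras are by definition isomorphic via maps of the form $g \mapsto t(g)g$, and the base change only alters the cocycle by a coboundary absorbable in such scalars $t(g)$, the required automorphism exists, but assembling it explicitly from $P$ is the principal bookkeeping step.
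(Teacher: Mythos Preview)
Your proposal is correct and mirrors the paper's own argument, which is precisely the discussion in \sect{sec:central-ext} immediately preceding the proposition (identification of the fiber as a twisted group algebra, skew normal form via \cite[Theorem IV.1]{MN1972}, the tensor factorization $(\halg/\maxi\halg)/I\cong R_1\tensork\cdots\tensork R_s$, and \lm{lm:dim_onepair}). The only difference is that the paper obtains $L=\stab{V}$ directly from \eqref{eq:stab-le} together with $|L|=(\dim V)^2$, so your appeal to \coro{tcor:basic}\ref{tcor:basic:stab} is redundant rather than essential.
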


\begin{example}
Let $\bgp=\Zn{3}\times\Zn{3}\times\bbz\times\bbz$ and $\agp=(\Zn{3})^2$. Denote the standard generators by $a_1,a_2,a_3,a_4\in \bgp$ and $c_1,c_2\in \agp$. Fix $\tccl\in\cocyl{\bgp}{\agp}$ given by
\begin{equation*}
 \tccl(x,y)=
   \begin{cases}
     (1,0)\quad x=a_1=(1,0,0,0),y=a_2=(0,1,0,0);\\
     (0,1)\quad x=a_1=(1,0,0,0),y=a_4=(0,0,0,1);\\
     (0,0)\quad\text{otherwise}.
   \end{cases}
\end{equation*}
We may identify $\egp$ with $\bgp_{\tccl}$ described in \eq{eq:g_ext_a} and \eq{eq:g_ext_b}. Choose $\cgp=\generate{c_1,c_2,a_3,a_4^3}$ and $\field=\mathbb{C}$. Set $\halg=\mathbb{C}\egp$, $\calg=\mathbb{C}\cgp$. Then $\maxspec{C}\cong (\Zn{3})^2\times\mfield^2$ where $\mfield$ is the multiplicative group of nonzero elements in $\field$, more concretely define $b\coloneqq a_4^3$ and denote by $\runit$ a cubic primitive root of unity; then
\begin{align*}
  \maxspec{C}=\{&\maxi_{u,v,w,x}=\generate{c_1-u,c_2-v,a_3-w,b-x}:\nonumber\\
  &u,v\in\{1,\runit,\runit^2\};w,\,x\in\field\setminus\{0\}\}.
\end{align*}
Characters over $\idc$ are
\begin{align*}
  G_0&=\{\chi\in\gpfdh{\halg}:\chi(c_1)=\chi(c_2)=\chi(a_3)=1;\,\chi(a_1),\chi(a_2),\chi(a_4)\in\{1,\runit,\runit^2\}\}\nonumber\\
  &\cong \egp/\cgp\cong (\Zn{3})^3.
\end{align*}
And \eq{eq:main_dist} becomes
    \begin{equation*}
      \zeros(D_k(\halg/\calg,\trreg))=\begin{cases}
			       \varnothing,\quad &k \le 27,\\
			       \maxspec {C}\cong (\Zn{3})^2\times\mfield^2,\quad &k>27.
                             \end{cases}
    \end{equation*}
 The Hopf algebra $\halg$ is a free $\calg$-module of rank $27$ with basis
 \begin{equation*}
   \{a_1^i a_2^j a_4^k: i,j,k\in\{0,1,2\}\}.
 \end{equation*}
 For any $\maxi_{u,v,w,x}\in\maxspec{C}$
\begin{align*}
  \halg/\maxi_{u,v,w,x}\halg\cong \frac{\field\generate{X^{\pm 1},Y^{\pm 1},Z^{\pm 1}}}{(X Y-u Y X,X Z-v Z X,Z^3-x)}.
\end{align*}
Up to the choice of third primitive root of unity there are five cases determined by $u$ and $v$. Fix an $\maxi\in\maxspec{\calg}$ we can pick a different set of generators $\{y_1,y_2,y_3,y_4\}$ of $\halg/\maxi\halg$ as a $\field$-algebra written in $\bar{a}_i$, i.e. the images under natural projection of $a_i$ to simplify the matrix \eq{eq:N} describing the multiplications. As $G_0\cong\bgp$ is abelian, $\stab{\maxi}$ does not depend on $V\in\irr(\halg/\maxi\halg)$\cite[Proof of Lemma 3.9]{MWY2023}. The five cases are listed in \tab{tab:inv_mtx}.
\begin{table}
\centering
\begin{tblr}{ |c| c| c| c| c| }
\toprule
Case & $\{u,v\}$ & $\{y_1,y_2,y_3,y_4\}$  & $\cmmtx(\xi)$ & $\{\{\chi(y_i)\}:\chi\in\stabb_{G_0}\}$\\
\midrule\\
I & $\{1,1\}$ & $\{\bar{a}_1,\bar{a}_2,\bar{a}_3,\bar{a}_4\}$ & $\begin{bmatrix} 0 & 0 &\\0 & 0 &\\ & & \ddots \end{bmatrix}$ & $\{1,1,1,1\}$\\
\midrule\\
II & $\{\runit,1\}$ & $\{\bar{a}_1,\bar{a}_2,\bar{a}_3,\bar{a}_4\}$ & $\begin{bmatrix}   0 & 1 &\\ -1 & 0 &\\ & &  \ddots  \\\end{bmatrix}$ & $\{\runit^i,\runit^j,1,1\}_{i,j\in\bbz}$\\
\midrule\\
III & $\{1,\runit\}$ & $\{\bar{a}_1,\bar{a_4},\bar{a}_3,\bar{a}_2\}$ &  $\begin{bmatrix}   0 & 1 &\\ -1 & 0 &\\ & &  \ddots \\\end{bmatrix}$ & $\{\runit^i,\runit^j,1,1\}_{i,j\in\bbz}$\\
\midrule\\
IV & $\{\runit,\runit\}$ & $\{\bar{a}_1,\bar{a}_2,\bar{a}_3,\bar{a}_4\bar{a}_2^{-1}\}$ & $\begin{bmatrix}   0 & 1 &\\-1 & 0 &\\ & &  \ddots  \\\end{bmatrix}$ & $\{\runit^i,\runit^j,1,1\}_{i,j\in\bbz}$\\
\midrule\\
V & $\{\runit,\runit^2\}$ & $\{\bar{a}_1,\bar{a}_2,\bar{a}_3,\bar{a}_4\bar{a}_2^{-2}\}$ & $\begin{bmatrix}   0 & 1 &\\-1 & 0 &\\ & &  \ddots  \\\end{bmatrix}$ & $\{\runit^i,\runit^j,1,1\}_{i,j\in\bbz}$\\
\bottomrule
\end{tblr}
\caption{Five cases of fiber algebras}
  \label{tab:inv_mtx}
\end{table}

Pick $\maxi=\maxi_{\runit,\runit^2,w,x}$ in the fifth case, the irreducible modules are three-dimensional. Let $V$ be a three dimensional vector space with basis $\{v_1,v_2,v_3\}$ and define an $\halg$-module structure on $V$ by 
\begin{align*}
  c_1&=c_2=\begin{bmatrix} \runit & & \\ & \runit & \\ & & \runit\\ \end{bmatrix}, 
  a_1=\begin{bmatrix} 1  & &\\ & \runit & \\&&\runit^2\end{bmatrix},
  a_2=\begin{bmatrix} 0 & 1 &0\\ 0& 0&1\\1 & 0 &0\end{bmatrix}, 
  a_3=\begin{bmatrix} w & & \\ & w & \\ & & w\\ \end{bmatrix},\\ 
  a_4&=\begin{bmatrix} 0 & 0 &\sqrt[3]{x}\\ \sqrt[3]{x}& 0&0\\0 & \sqrt[3]{x} &0\end{bmatrix}; 
\end{align*}
where $\sqrt[3]{x}$ is a cubic root of $x$ then $V$ is an irreducible $\halg$-module over $\maxi$. The stabilzer is given by
\begin{align*}
  \stab{V}=\{\chi\in\gpfdh{\halg}:&\chi(c_1)=\chi(c_2)=\chi(a_3)=1;\nonumber\\
  &\chi(a_1),\chi(a_2)=\chi(a_4)^{2}\in\{1,\runit,\runit^2\}\}.
\end{align*}
The orbit of $\idc$ under winding automorphism groups is the subset of $\maxspec{\calg}$ with basic fibers; thus,
\begin{align*}
    \orbidc &=\autk(\halg,\calg)\cdot \idc=\lw(\gpfdh{\halg})\cdot \idc=\rw(\gpfdh{\halg})\cdot \idc\nonumber\\
    &=\gpfdh{\calg}= \{\maxi_{1,1,w,x}:w,\,x\in\mfield\}\cong\mfield^2.
\end{align*}
By considering the multiplication on the affine algebraic group $\maxspec{\calg}$, we see that for $\maxi\in\maxspec{\calg}$ written as $\maxi_{\xi^i,\xi^j,w,x}$ for some $i,j\in\Zn{3}$ and $w,x\in\mfield$
\begin{align*}
    \orbidc_{i,j} &=\lw(\gpfdh{\halg})\cdot \maxi_{\xi^i,\xi^j,w,x}=\rw(\gpfdh{\halg})\cdot \maxi_{\xi^i,\xi^j,w,x}\nonumber\\
    &= \{\maxi_{\xi^i,\xi^j,w,x}:w,\,x\in\mfield\}\cong\mfield^2.
\end{align*}
Comparison with \tab{tab:inv_mtx} shows that the action of winding automorphism groups is far from transitive on subsets of $\maxspec{\calg}$ with nontrivial isomorphic stablizers. Note in general $\stab{\maxi_1}\cong\stab{\maxi_2}$ is a stronger condition than $\dim V_1=\dim V_2$ for some $V_1\in\irr(\halg/\maxi_1\halg)$, $V_2\in\irr(\halg/\maxi_2\halg)$ or equivalently $\halg/\maxi_1\halg\cong\halg/\maxi_2\halg$. For example, let $\xi$ be a primitive fourth root of unity,  one can construct an example with two isomorphic fiber algebras corresponding to the matrices
\begin{equation*}
\cmmtx_1(\xi)=\begin{bmatrix} 0 & 1 & &\\-1 & 0 & & \\ && \ddots &\\&&&\ddots\end{bmatrix}\quad\text{and}\quad
\cmmtx_2(\xi)=\begin{bmatrix} 0 & 2 & &\\-2 & 0 & & \\ && 0&2\\&&-2&0\end{bmatrix}
\end{equation*}
as described in \prop{prop:c-ext} but nonisomorphic stabilizers.

In fact, two maximal ideals of $\calg$ with isomorphic stabilizers may not even be in the same orbit under $\autk(\halg,\calg)$. Let $V$ be an irreducible module of $\halg$ and $\rho:\halg\mapsto \halg/\mathrm{lann}(V)$ be the natural projection, then $\rho$ has a section of $\field$-algebra homomorphism in Case II but no such section in Case III. There are three orbits under $\autk(\halg,\calg)$: \{I\}, \{II,IV,V\} and \{III\}; where different choices of third primitive root of unity belong to the same orbit.
\label{ex:central-extension}
\end{example}
\begin{remark}
\hfill
\begin{enumerate}
\item
Unlike the examples in \cite[Section 5.1, 5.2]{MWY2023}, here $\{\stab{\maxi}\}_{\maxi\in\maxspec{\calg}}$ do not form a chain by containment. In fact, pick $\maxi_1$ and $\maxi_2$ in Cases II and III in \tab{tab:inv_mtx}; then $\stab{\maxi_1}\nsubseteq\stab{\maxi_2}$ and $\stab{\maxi_1}\nsupseteq\stab{\maxi_2}$.
\item
%
In general the action of $\gpfdh{\halg}$ given in \eq{eq:G_0-action} is not transitive on a subset of $\maxspec{\calg}$ with isomorphic stabilizers as long as $\stab{V}$ has an element of order greater than $2$. This can be seen as follows: Suppose there is such an irreducible representation $V$ over $c=\ann{\calg}{V}$ and pick an irreducible module $V'$ over $c^{-1}$. Then $\dim V=\dim V'$. If there is some $\chi\in\gpfdh{\halg}$ satisfying $\chi\cdot V'=V$, then $\chi$ vanishes on $c^2$. But there is no character or one-dimensional irreducible module over $c^2$ by assumption and \prop{prop:c-ext}. This is a contradiction.

\end{enumerate}
\end{remark}

\noindent{\bf{Acknowledgement}} The author is thankful to his advisor Prof. Milen Yakimov for posing the problem addressed in this paper as well as his guidance and encouragement. He also thanks his postdoctoral mentor Prof. Quanshui Wu for his support and advice and, Yimin Huang and Tiancheng Qi for helpful discussions.
\bibliographystyle{amsplain-nodash}
\bibliography{discriminant_ideal}
\end{document}